\documentclass[12pt]{amsart}
\usepackage{amsfonts,amsmath,amssymb,amscd,amssymb}
\usepackage{enumerate}
\usepackage{enumitem}
\usepackage{comment}
\usepackage{hyperref}
\newtheorem{theorem}{Theorem}[section]
\newtheorem{lemma}[theorem]{Lemma}

\newcommand{\del}{\delta}

\newcommand{\Q}{\mbox{$\mathbb Q$}}
\newcommand{\R}{\mbox{$\mathbb R$}}     % For Real numbers
\begin{document}
	\title[]{Prime ideal divisors of parametric recurrence sequences
    }
%    \begin{comment}
	\author[Darsana]{Darsana N}
	\address{Darsana N, Department of Mathematics, National Institute of Technology Calicut, 
		Kozhikode-673 601, India.}
	\email{darsana\_p230059ma@nitc.ac.in; darsanasri1996@gmail.com}

	\author[Rout]{S. S. Rout}
	\address{Sudhansu Sekhar Rout, Department of Mathematics, National Institute of Technology Calicut, 
		Kozhikode-673 601, India.}
	\email{sudhansu@nitc.ac.in; lbs.sudhansu@gmail.com}
%\end{comment}

	\dedicatory{}
	\thanks{2020 Mathematics Subject Classification: 11B37 (Primary), 11J86, (Secondary).\\
		Keywords: Linear recurrence sequence, linear forms in logarithm, prime ideal divisor, $S$-units.}
    
	\maketitle
	\pagenumbering{arabic}
	\pagestyle{headings}
    \begin{abstract}
We prove new arithmetic results for parametric linear recurrence sequences specialized at roots of unity, denoted by $(U_n(\zeta))_{n\geq 0}$. In particular, we obtain effective lower bounds for the largest prime ideal divisor and norm of the radical of the principal ideal generated by $U_n(\zeta)$. We further derive an effective upper bound for the $S$-part of $U_n(\zeta)$, showing that it is strictly smaller than a fixed power of its absolute norm for sufficiently large $n$.
\end{abstract}

    \section{Introduction}
    We recall that a linear recurrence sequence $(u_n)_{n\geq 0}$
 of order $k$ over a number field $K$ is a sequence which satisfies a relation of the form
\[
u_{n+k}=A_{k-1}u_{n+k-1}+\cdots+A_0u_n, \quad n\geq 0,
\] with some constants $A_0,\ldots, A_{k-1}, u_0,\ldots,u_{k-1}\in K$. Let $\alpha_1,\dots,\alpha_q$ be the distinct roots of the corresponding characteristic polynomial  
\begin{equation}\label{eq5}
	F(X):= X^k - A_{k-1}X^{k-1}-\dots-A_0.
\end{equation} 
Then for $n\geq 0$, we have 
\begin{equation}\label{eq6}
	u_n=f_1(n)\alpha_1^n +\cdots + f_q(n)\alpha_{q}^n,
\end{equation}
where $f_i(n)\; (i=1, \ldots, q)$ are non-zero polynomials with degree less than the multiplicity of $\alpha_i$; the coefficients of $f_i(n)$ are elements of the number field $\Q( \alpha_1, \ldots, \alpha_q)$ (see \cite[Theorem C.1]{tnshorey}).

The sequence $(u_n)_{n \geq 0}$ is called {\it degenerate} if there are integers $i, j$ with $1\leq i< j\leq q$ such that $\alpha_i/\alpha_j$ is a root of unity; otherwise it is called {\it non-degenerate}. The sequence $(u_n)_{n \geq 0}$ is called {\it simple} if $q=k$. In this case, \eqref{eq6} becomes
\begin{equation}\label{eqa}
	u_n=f_1\alpha_1^n +\cdots + f_k\alpha_{k}^n,
\end{equation}
where $f_i \; (i=1, \ldots, k)$ are constants.

For any integer $m$, let $P(m)$ denote the greatest prime factor of $m$ and
$Q(m)$ denote the greatest square-free factor of $m$ with the convention that $P(0) = P(\pm 1) = 1$ and $ Q(0)= Q(\pm 1) =1$. That is, if $m = p_1^{h_1}\ldots p_r^{h_r}$ with $p_1<\cdots<p_r$
 primes and $h_1,\ldots,h_r$ positive integers, then $P(m)=p_r$ and  $Q(m) = p_1\ldots p_r$. 
In 1935, Mahler~\cite{mahler1934arithmetische} showed that for any
non-degenerate linear recurrence sequence $(u_n)_{n\geq 0}$, the absolute value of
its terms grows without bound; more precisely,
\begin{equation}\label{ieq1}
|u_n| \longrightarrow \infty \quad \text{as } n \longrightarrow \infty .
\end{equation}
Later, van der Poorten and Schlickewei~\cite{vanderschlickewei1982}, as well
as Evertse~\cite{evertse1984sums}, using a $p$-adic version of Schmidt’s
subspace theorem due to Schlickewei~\cite{schlickewei1976linearformen},
proved that if $(u_n)_{n\ge 0}$ is a non-degenerate linear recurrence
sequence, then the greatest prime factor of $u_n$ tends to infinity, namely
\begin{equation}\label{ieq2}
P(u_n) \longrightarrow \infty \quad \text{as } n \longrightarrow \infty .
\end{equation}
Both results are ineffective in the sense that they do not yield explicit
bounds. In contrast, if the characteristic polynomial of $(u_n)_{n\geq 0}$ admits a
dominant root, that is,
\[
|\alpha_1| > |\alpha_j| \qquad (j=2,\ldots,q),
\]
then one obtains an effective lower bound of the form
\[
|u_n| > c_1 n^{\ell_1} |\alpha_1|^n
\]
for all $n > c_2$, where $c_1$ is equal to one half of the absolute value of
the coefficient of $x^{\ell_1}$ in the polynomial $f_1$, and $c_2$ is a
positive constant that can be computed effectively in terms of
$\alpha_1,\ldots,\alpha_q$ and $f_1,\ldots,f_q$.

In 1982, Stewart~\cite{stewart1982divisors} derived effective lower bounds
for both the greatest prime factor and the greatest square-free factor of
$u_n$ in the case where the characteristic polynomial admits a dominant
root. More precisely, assuming that $u_n \neq f_1(n)\alpha_1^n$, for any
$\varepsilon>0$ one has
\begin{equation}\label{ieq3}
P(u_n) > (1-\varepsilon)\log n
\end{equation}
and
\begin{equation}\label{ieq4}
Q(u_n) > n^{1-\varepsilon},
\end{equation}
for all $n>c_3$, where $c_3$ is a positive constant that can be computed
effectively in terms of $\varepsilon$ and the parameters
$\alpha_1,\ldots,\alpha_q$ and $f_1,\ldots,f_q$.
These bounds were obtained by applying a version of Baker’s theory on
linear forms in logarithms of algebraic numbers due to
Waldschmidt~\cite{waldschmidt1980lower}. Independently,
Shparlinski~\cite{shparlinski1980} established the estimate for
$P(u_n)$ in the special case where $f_1(n)$ is a non-zero constant, with
the exponent $1-\varepsilon$ replaced by a suitably small positive
constant. Later, both inequalities were further refined using a result
of Matveev~\cite{matveev2000explicit}, which gives explicit lower bounds
for linear forms in logarithms of algebraic numbers.

Let $ S=\{p_1,\ldots,p_s\} $ be a finite, non-empty set of distinct prime numbers.
For an integer $ m $, its $ S $-part, denoted by $ [m]_S $, is defined as
\[
[m]_S = p_1^{r_1}\cdots p_s^{r_s},
\]
where $ m = p_1^{r_1}\cdots p_s^{r_s} M $ with $ M $ coprime to each $ p_i $
for $ i=1,\ldots,s $, and $ r_1,\ldots,r_s $ are non-negative integers.
Using a $ p $-adic extension of Roth’s theorem due to Ridout~\cite{Ridout1958},
Mahler~\cite{mahler1966} showed that for certain classes of binary linear
recurrence sequences $ (u_n)_{n\ge 0} $, we have the following
\[
[u_n]_S < |u_n|^{\varepsilon}
\]
for all sufficiently large $ n $.
He also noted that this estimate implies that the greatest prime divisor
$ P(u_n) $ tends to infinity as $ n \to \infty $.
Subsequently, under different sets of hypotheses on the sequence
$ (u_n)_{n\ge 0} $, Bugeaud and Evertse~\cite{bugeaud2017parts} obtained effective,
though weaker, bounds for the $ S $-part of $ u_n $ of the shape
$ |u_n|^{1-c} $. For other related work, one can refer to (for example, \cite{bugeaud2017s, bugeaud2018digital, meher2022s}).

It is a classical consequence of the Skolem-Mahler-Lech theorem
\cite{everest2003recurrence} that the zero set of a linear recurrence sequence
$ (u_n)_{n\ge 0} $, namely the set  $ n\in\mathbb{N} $ for which
$ u_n=0 $, consists of the union of a finite set and finitely many arithmetic
progressions. In particular, although strong bounds-uniform in all parameters
and depending only on the order $ k $ are known for the total number of zeros
(see \cite{amoroso2011zeros}), no effective bound is available for the size of an
index $ n $ satisfying $ u_n=0 $.
A parametric variant of this problem, concerning families of linear recurrence sequences, was investigated by Ostafe and Shparlinski \cite{ostafe2022skolem}. For all but finitely many values of the parameter for
which the specialized sequence is non-degenerate, they established an explicit
upper bound for the largest possible zero. As a consequence, they showed that
the Skolem problem is effectively decidable outside a set of parameter values
$ \alpha\in\overline{\mathbb{Q}} $ of bounded height. More precisely, they
considered linear recurrence sequences of the form
\begin{equation}\label{ieq6}
U_n(X)=\sum_{i=1}^k f_i(X)\alpha_i(X)^n, \quad n\ge 0,
\end{equation}
where $ f_i(X),\alpha_i(X)\in \overline{\mathbb{Q}}(X) $. Furthermore, they analyzed other properties of the specialized values $ U_n(\zeta) $ when the
parameter is restricted to the set $ \mathbb{U} $ of all the roots of unity.

In this paper, we study the arithmetic properties of parametric linear recurrence sequences obtained by specializing the parameter at roots of unity.
Our first main result establishes strong lower bound for the largest prime
ideal divisor and for the radical of the principal ideal generated by the values
$ U_n(\zeta) $. More precisely, under suitable non-degeneracy and
non-exceptionality assumptions on the characteristic roots, we prove that for
all but finitely many roots of unity $ \zeta $, both the greatest prime ideal
divisor $ P(U_n(\zeta)) $ grow
at least logarithmically with $n$ and the norm of the radical $ Q(U_n(\zeta)) $ grow at least power of $n$, with effectively computable constants.
Our second result complements this result by providing an effective upper bound
for the norm of the $ S $-part of $ U_n(\zeta) $, showing that for
sufficiently large $ n $ it is strictly smaller than a fixed power of the
absolute norm of $ U_n(\zeta) $.

\section{Notation and Results} 
To formulate our results, we first recall some basic notions. A \emph{finite Blaschke product} is a rational function
$B(z) \in \mathbb{C}(z)$ of the form
\begin{equation}
\label{eq:2.1}
B(z)=\zeta \prod_{i=1}^{n}
\left(\frac{z-a_i}{1-\overline{a_i}z}\right)^{m_i},
\end{equation}
where $a_i$ are complex numbers lying in the open unit disc
$\{z \in \mathbb{C} : |z|<1\}$, the exponents $m_i$ are positive integers,
and $|\zeta|=1$.

A rational function $Q(z)$ of the form $Q(z)=B_1(z)/B_2(z)$, where
$B_1$ and $B_2$ are finite Blaschke products, is called a
\emph{quotient of finite Blaschke products}.
A pair of rational functions $(g_1(X), g_2(X)) \in \mathbb{C}(X)^2$
is said to be \emph{exceptional} if there exist quotients of finite
Blaschke products $Q_1, Q_2$ and a rational function $g$ such that
\[
g_1 = Q_1 \circ g
\quad \text{and} \quad
g_2 = Q_2 \circ g.
\]
Otherwise, the pair $(g_1(X), g_2(X))$ is called \emph{non-exceptional}.
We consider a linear recurrence sequence of the form \eqref{ieq6} and define
\begin{align*}
\mathcal{C}_{\alpha,f} =
\Bigl\{\zeta \in \overline{\mathbb{Q}} :
&\ \alpha_i(\zeta)/\alpha_j(\zeta) \text{ is a root of unity for some }
1 \le i < j \le k,\\
&\ \text{or } f_i(\zeta)=0 \text{ or } \alpha_i(\zeta)=0
\text{ for some } 1 \le i \le k \Bigr\}.
\end{align*}
Note that the elements of
$\mathcal{C}_{\alpha,f}$ have a bounded height (see \cite[Theorem~3.11]{silverman2007arithmetic}).
Hence, throughout this work, we restrict $\zeta$ to the set
$\mathbb{U}$ of all roots of unity. We denote by $\mathbb{Z}_{\overline{\mathbb{Q}}}$ the set of all algebraic
integers in $\overline{\mathbb{Q}}$.
For a polynomial $f \in \mathbb{C}[X]$, we write $\overline{f}$ for the
polynomial obtained by complex conjugating all coefficients of $f$.
Let $f_i(X), \alpha_i(X) \in \mathbb{Z}_{\overline{\mathbb{Q}}}[X]$,
$i=1,\ldots,k$, be nonzero polynomials of degree at most $d$.
We say that the linear recurrence sequence $(U_n(\zeta))_{n\geq 0}$,
defined as in \eqref{ieq6}, is of \emph{desired structure} if the following
conditions are satisfied:
\begin{itemize}
\item[(i)] For any $1 \le r < s < m \le k$, the pair of rational functions
$(\alpha_s/\alpha_r, \alpha_m/\alpha_r)$ is non-exceptional.
\item[(ii)] $U_n(\zeta)\neq f_1(\zeta)\alpha_1(\zeta)^n$ and
$|U_n(\zeta)|\geq 1$ for all $n\geq 0$.
\end{itemize}

Finally, we define
\[
P(U_n(\zeta)) := \max\{ N(\mathfrak{p}) : \mathfrak{p} \mid [U_n(\zeta)] \}
\quad \text{ and }\quad
Q(U_n(\zeta)) := \prod_{\mathfrak{p} \mid [U_n(\zeta)]} \mathfrak{p},
\]
where $[U_n(\zeta)]$ denotes the ideal generated by $U_n(\zeta)$ and $N(\mathfrak{p}):=\text{Norm} (\mathfrak{p})$ denotes the norm of the ideal $\mathfrak{p}$.

Our first main result is the following.

\begin{theorem}\label{thm1}
Let $(U_n(\zeta))_{n\geq 0}$ be a linear recurrence sequence of the desired structure. Let $K$ be a number field containing $\alpha_i(\zeta), f_i(\zeta)~(i=1,\ldots,k)$ and $D$ be its degree over $\mathbb{Q}$.
Then
\begin{equation}\label{neq12}
    P(U_n(\zeta))>C_2\log n\frac{\log \log n}{\log \log \log n}
\end{equation}
 and 
 \begin{equation}\label{neq13}
     N(Q(U_n(\zeta)))>n^{C_3(\log \log n)/\log \log \log n},
 \end{equation}
for all $n\geq C_1$ and for all but at most $2d(2dk^3/3 + 1)$ elements
$\zeta \in \mathbb{U} \setminus \mathcal{C}_{\alpha,f}$,
where $C_1, C_2$ and $C_3$ are computable constants depending on $\zeta, K$, $D$ the degree of the number field $K$, $r$ the rank of the unit group of $K$, $h_K$ the class number of the field $K$  and $k$.
\end{theorem}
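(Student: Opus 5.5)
The plan follows Stewart's dominant-root method, adapted to the specialized sequence $U_n(\zeta)$.

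\textbf{Step 1: a dominant root.} The first task is to show that, for all but at most $2d(2dk^3/3+1)$ roots of unity $\zeta\notin\mathcal{C}_{\alpha,f}$, one characteristic root of $U_n(\zeta)$ strictly dominates the others in absolute value. For this I would use the result of Ostafe and Shparlinski \cite{ostafe2022skolem}, which exploits the non-exceptionality of the pairs $(\alpha_s/\alpha_r,\alpha_m/\alpha_r)$. Its content is that, outside a finite exceptional set of $\zeta\in\mathbb{U}$ of the stated cardinality, there are no ties $|\alpha_i(\zeta)|=|\alpha_j(\zeta)|=|\alpha_m(\zeta)|$ among three roots. I would then need one further argument to rule out a tie between exactly two maximal roots. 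One option is to reorder the roots, and where necessary use a Galois conjugate embedding of $K$, so that $|\alpha_1(\zeta)|>|\alpha_j(\zeta)|$ for $j\ge2$. Together with hypothesis (ii), this gives
\[
|U_n(\zeta)|\ge c\,|\alpha_1(\zeta)|^n
\]
for $n$ large. Here $c$ depends on $\zeta$, and we may assume $|\alpha_1(\zeta)|>1$.

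\textbf{Step 2: $p$-adic upper bounds.} Fix a prime ideal $\mathfrak{p}$ of $K$ dividing $U_n(\zeta)$. Write $U_n(\zeta)=f_1\alpha_1^n\bigl(1+\sum_{j\ge2}(f_j/f_1)(\alpha_j/\alpha_1)^n\bigr)$. Iterating Stewart's argument with Yu's $\mathfrak{p}$-adic lower bound for linear forms in logarithms, I would show
\[
\operatorname{ord}_{\mathfrak{p}} U_n(\zeta)\le C\,\frac{N(\mathfrak{p})}{\log N(\mathfrak{p})}\,(\log n)^{k}.
\]
The exact power of $\log n$ does not matter. The constant $C$ depends on $K$, $D$, $h_K$, the unit rank $r$, $k$ and the heights of $\alpha_i(\zeta)$, $f_i(\zeta)$. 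The units and the class number enter because one passes to $S$-unit generators of powers of $\mathfrak{p}$. The key feature of Yu's bound is its factor $N(\mathfrak{p})/\log N(\mathfrak{p})$ rather than $N(\mathfrak{p})$, since this is what produces the $\log\log n/\log\log\log n$ gain.

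\textbf{Step 3: comparing norms.} Taking norms,
\[
\log|N_{K/\mathbb{Q}}U_n(\zeta)|\le \sum_{\mathfrak{p}\mid U_n(\zeta)}\operatorname{ord}_{\mathfrak p}(U_n(\zeta))\log N(\mathfrak p),
\]
and by Step 1 (with the conjugate estimates bounded above trivially) the left side is at least $c'n$.

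\emph{Bound on $P$.} Suppose every $\mathfrak{p}$ has $N(\mathfrak{p})\le P:=P(U_n(\zeta))$. There are at most $D\pi(P)$ such primes. Each contributes at most $C P(\log n)^k$, so
\[
n\ll D\,\frac{P^2}{\log P}(\log n)^k.
\]
This already gives a power-of-$n$-type bound. To get the precise shape in \eqref{neq12}, I would instead follow Stewart's refinement. In that refinement, Matveev/Yu is applied with the number of logarithms growing with the count $t$ of primes involved, and the constant behaves like $t^{t}$. Balancing $t\log t\approx\log n$ yields $t\gg \log n/\log\log n$. Since $P\ge$ the $t$-th prime norm, this gives $P\gg \log n\,\log\log n/\log\log\log n$.

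\emph{Bound on $Q$.} Similarly, $\log N(Q)=\sum\log N(\mathfrak p)\ge t$. Combining the $t\log t$ balancing with the explicit dependence of Yu's constant on $N(\mathfrak p)$ gives $\log N(Q)\gg \log n\,\log\log n/\log\log\log n$, which is \eqref{neq13}.

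\textbf{Main obstacle.} I expect the hardest part to be Step 1. It has two components: controlling the exceptional $\zeta$ with the exact count $2d(2dk^3/3+1)$, and excluding two-root ties. My first attempt there is to argue that $|\alpha_i(\zeta)|=|\alpha_j(\zeta)|$ on infinitely many roots of unity forces $\alpha_i\overline{\alpha_i}(1/X)=\alpha_j\overline{\alpha_j}(1/X)$ identically. That identity would make $\alpha_i/\alpha_j$ a quotient of Blaschke products, which conflicts with condition (i) after pairing with a third root. The secondary difficulty is keeping track of the dependence of the constants on $\zeta$; this is harmless, since the statement allows $C_1,C_2,C_3$ to depend on $\zeta$.
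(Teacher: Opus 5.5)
Your Step~1 has the same overall structure as the paper's: three-way ties are excluded via non-exceptionality and Lemma~\ref{lem2}, then two-way ties are treated separately. Steps~2--3, however, have a genuine gap.

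\textbf{The $\mathfrak p$-adic bound is not available.} The estimate $\operatorname{ord}_{\mathfrak p}U_n(\zeta)\le C\frac{N(\mathfrak p)}{\log N(\mathfrak p)}(\log n)^k$ does not follow from Yu's theorem. Yu controls $\operatorname{ord}_{\mathfrak p}(\eta_1^{b_1}\cdots\eta_m^{b_m}-1)$, which covers $U_n(\zeta)$ only when it has two terms. For $k\ge3$ the factor $1+\sum_{j\ge2}(f_j/f_1)(\alpha_j/\alpha_1)^n$ is not of that form. Dominance of $\alpha_1(\zeta)$ is an archimedean statement about one embedding, so it gives you nothing to iterate on $\mathfrak p$-adically, and no effective substitute is known for $k\ge3$. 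Your own remark that this ``already gives a power-of-$n$-type bound'' ($P\gg n^{1/2}(\log n)^{-k/2}$) is the warning sign. That would be vastly stronger than the theorem, and than anything known for general recurrences.

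\textbf{The norm lower bound fails.} The inequality $\log|N_{K/\mathbb{Q}}U_n(\zeta)|\ge c'n$ does not follow from a lower bound for $|U_n(\zeta)|$ in one embedding plus upper bounds for the other conjugates. Those conjugates may be small, and valuations determine $U_n(\zeta)$ only up to a unit.

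\textbf{The balancing arithmetic is off.} With a constant $t^t$, balancing $t\log t\approx\log n$ gives $t\gg\log n/\log\log n$. But the $t$-th smallest prime ideal norm is only of order $t\log t\approx\log n$. So you get $P\gg\log n$, without the factor $\log\log n/\log\log\log n$.

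\textbf{What the paper does instead.} The missing idea is to stay archimedean, let the prime ideals enter as logarithms, and let the unit group absorb the ambiguity in the choice of generators. The paper writes $[U_n(\zeta)]=\mathfrak p_1^{a_1}\cdots\mathfrak p_t^{a_t}$ and $\mathfrak p_i^{h_K}=[\pi_i]$, choosing $\pi_i$ via Lemma~\ref{lem3} so that $h(\pi_i)\ll\log N(\mathfrak p_i)$. Then $U_n(\zeta)^{h_K}=\mu\epsilon_1^{m_1}\cdots\epsilon_r^{m_r}\pi_1^{a_1}\cdots\pi_t^{a_t}$, so $\log\bigl(U_n(\zeta)/(f_1(\zeta)\alpha_1(\zeta)^n)\bigr)$ is a linear form in $t+r+O(1)$ logarithms with coefficients $\ll tn^2$.
\begin{itemize}
\item Dominance makes this form $\ll|\alpha_1(\zeta)|^{-n(1-\delta)}$.
\item Matveev (Lemma~\ref{lem6}) bounds it below by a constant $C^t$, exponential in the number of logarithms, times $\prod_i\log N(\mathfrak p_i)$.
\item Comparing the two gives $n/\log(nt)<C^t\prod_{i=1}^t\log N(\mathfrak p_i)$.
\item AM--GM bounds the product by $\bigl(h_K\log N(Q(U_n(\zeta)))/t\bigr)^t$. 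Optimizing in $t$, split at $t=\log n/\log\log\log n$, gives \eqref{neq13}; for larger $t$ the product of the first $t$ primes already suffices.
\item Then $N(Q(U_n(\zeta)))<\exp(C\,P(U_n(\zeta)))$ gives \eqref{neq12}.
\end{itemize}
The gain comes precisely from the $C^t$ dependence (not $t^t$), together with keeping the factors $\log N(\mathfrak p_i)\le\log P$. This yields $\log n\ll t\log\log P$ with $t\ll P/\log P$, hence the extra factor $\log\log n/\log\log\log n$. No $\mathfrak p$-adic estimate and no lower bound for the norm are needed.

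\textbf{On Step~1.} The paper handles two-way ties by substituting $\overline{\zeta}=\zeta^{-1}$ into $|\alpha_i(\zeta)|^2=|\alpha_j(\zeta)|^2$ and counting the at most $2d$ roots of the resulting polynomial equation. Your plan to rule out the identical case through condition (i) does not work. Unimodularity of $\alpha_i/\alpha_j$ on the circle says nothing about $\alpha_m/\alpha_j$, so the pair need not be exceptional, and for $k=2$ there is no third root at all. The paper's count tacitly assumes the equation is not an identity as well. For instance, $\alpha_1=2X-1$ and $\alpha_2=2-X$ have equal modulus at every $\zeta\in\mathbb{U}$, and adjoining $\alpha_3=1$ keeps (i) satisfied.
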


Let $\mathcal{S}=\{\mathfrak{p}_1,\ldots,\mathfrak{p}_s\}$ be a finite non-empty set of distinct prime ideals. For a non-zero algebraic number $\alpha$ we write $[\alpha]=\mathfrak{p}_1^{r_1}\cdots\mathfrak{p}_s^{r_s}\mathfrak{a}$, where $r_1,\ldots,r_s$ are non-negative integers and $\mathfrak{a}$ is a non-zero ideal in the ring of integers of $K$ which is relatively prime to $\mathfrak{p}_1,\ldots,\mathfrak{p}_s$. We define the $S$-part $[\alpha]_S$ of $\alpha$ by
\[
[\alpha]_\mathcal{S}=\mathfrak{p}_1^{r_1}\cdots\mathfrak{p}_s^{r_s}.
\]
Then, we have the following result.
\begin{theorem}\label{thm2}
 Let $(U_n(\zeta))_{n\geq 0}$ be a linear recurrence sequence of the desired structure. Let $K$ be a number field containing $\alpha_i(\zeta), f_i(\zeta)~(i=1,\ldots,k)$  and $D$ be its degree over $\mathbb{Q}$, and $\mathcal{S}$ be a finite non-empty set of prime ideals in $\mathcal{O}_K$. Then there exist effectively computable positive numbers $C_4$ and $C_5$ depending on $U_n(\zeta), K, D, k$ and the cardinality $\#\mathcal{S}$ of $\mathcal{S}$ such that
 \[
N([U_n(\zeta)]_\mathcal{S})\leq |N(U_n(\zeta))|^{1-C_{4}}
\] for all $n\geq C_{5}$ and 
 for all but at most $2d(2dk^3/3 + 1)$  elements
$\zeta \in \mathbb{U} \setminus \mathcal{C}_{\alpha,f}$. 
   
\end{theorem}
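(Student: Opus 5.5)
We follow the method of Bugeaud and Evertse, in the setting of Theorem~\ref{thm1}. The input from the parametric theory is this: outside the at most $2d(2dk^3/3+1)$ exceptional roots of unity, the non-exceptionality hypothesis (i) gives a dominant root. Concretely, after renumbering, $|\alpha_1(\zeta)|>|\alpha_j(\zeta)|$ for $j\ge2$, via the Ostafe--Shparlinski type argument on quotients of Blaschke products already used for Theorem~\ref{thm1}. Write $U_n=U_n(\zeta)$, $\alpha_i=\alpha_i(\zeta)$, $f_i=f_i(\zeta)$.

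\emph{Step 1: size and norm of $U_n$.} Since $\alpha_1$ dominates, we have $\log|U_n| = n\log|\alpha_1| + O(1)$ for $n$ large. We want the same at every archimedean place of $K$. So I would bound $|N(U_n)|\le c^{\,n}\prod_\sigma \max_i|\sigma(\alpha_i)|^{n}$ from above and $|N(U_n)|\ge 1$ from below. Since $U_n$ is an algebraic integer, this gives $\log|N(U_n)| \asymp n$ with effective constants; the lower bound $\log |N(U_n)|\gg n$ is what is needed. Here I would use the embedding corresponding to $\zeta$ together with $|\sigma(U_n)|\ge$ a power of $H$ coming from Liouville-type estimates. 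A cleaner route is to write everything in terms of $h(U_n)$ and compare with $\log N([U_n]_{\mathcal S})$.

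\emph{Step 2: $\mathfrak p$-adic upper bound for the $\mathcal S$-part.} For each $\mathfrak p\in\mathcal S$, write
\[
U_n = f_1\alpha_1^n\Bigl(1 + \sum_{j\ge2}\tfrac{f_j}{f_1}(\alpha_j/\alpha_1)^n\Bigr).
\]
Here $\operatorname{ord}_{\mathfrak p}(f_1\alpha_1^n)$ is linear in $n$; if $\mathfrak p\mid\alpha_1$ this contributes at most $n\cdot\operatorname{ord}_{\mathfrak p}\alpha_1$, which must be controlled. It is cleaner to split $U_n$ into the dominant-at-$\mathfrak p$ terms and the rest, in the style of Bugeaud--Evertse. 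The key estimate is $\operatorname{ord}_{\mathfrak p}(U_n) \le (1-c)\,n\log|\alpha_1|/\log N\mathfrak p + O(\log n)$. For $k=2$, this follows from Yu's $p$-adic lower bound for linear forms in logarithms applied to $\Lambda=(-f_2/f_1)(\alpha_2/\alpha_1)^n - 1$. That bound gives $\operatorname{ord}_{\mathfrak p}\Lambda\le C\log n$, with $C$ depending on $K$, $\mathfrak p$ and the heights. For general $k$, I would iterate: choose a gap index $t$ so that the terms $j\le t$ are $\mathfrak p$-adically large, apply Yu to the partial sums, and conclude $\operatorname{ord}_{\mathfrak p}U_n\le \operatorname{ord}_{\mathfrak p}(\text{leading part}) + C\log n$.

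\emph{Step 3: combine.} Summing over $\mathfrak p\in\mathcal S$ gives $\log N([U_n]_{\mathcal S})\le \sum_{\mathfrak p}\operatorname{ord}_{\mathfrak p}(\text{leading}) \log N\mathfrak p + C'\log n$. By the product formula, the leading contribution is at most $\log|N(\text{leading part})|$ minus the archimedean excess. The archimedean excess is $\ge c\,n$ because of strict dominance together with hypothesis (ii), $U_n\ne f_1\alpha_1^n$, which keeps the leading part from capturing the whole norm. Hence $\log N([U_n]_{\mathcal S})\le \log|N(U_n)| - cn + C'\log n\le (1-C_4)\log|N(U_n)|$ for $n\ge C_5$, using Step 1.

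\emph{Main obstacle.} The hard step is the non-archimedean Step 2 for general order $k$. When several $\alpha_j$ share the same $\mathfrak p$-adic absolute value, a single linear form does not suffice. My first attempt there would be the Bugeaud--Evertse inductive gap argument, combined with Yu's theorem at each stage; at those stages the $\log n$ losses accumulate but remain $o(n)$.
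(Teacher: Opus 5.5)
\textbf{There is a genuine gap, in Step~3.} Write $U_n=U_n(\zeta)$ and $[U_n]=[U_n]_{\mathcal S}\,\mathfrak a$. Your claim that the ``archimedean excess'' is $\ge cn$ amounts to the lower bound $\log N(\mathfrak a)\ge cn-O(\log n)$. That is essentially the whole theorem, and nothing in Steps~1--2 produces it.

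Dominance holds at one complex embedding only. There it gives $\log|U_n|=n\log|\alpha_1|+O(1)$. At the other embeddings Liouville gives only $\log|\sigma(U_n)|\ge -O(n)$, which can cancel that gain. So the only unconditional lower bound is $|N(U_n)|\ge 1$. In particular, the lower bound $\log|N(U_n)|\gg n$ you want in Step~1 is not available. Only the upper bound $\log|N(U_n)|\ll n$ is, and that is all the final inequality needs. Hypothesis~(ii) is purely qualitative.

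To see that an ingredient is missing, take $\mathbb{Q}(\sqrt2)$, $\mathcal S=\{(\sqrt2)\}$ and $x_n=\sqrt2\,(1+\sqrt2)^n$. These have every property your Steps~1--2 actually establish: size $n\log(1+\sqrt2)+O(1)$ at one embedding, bounded $\mathcal S$-adic valuation, and $\log|N(x_n)|\ll n$. Yet $N([x_n]_{\mathcal S})=|N(x_n)|=2$. What excludes this for $U_n$ is the quantitative closeness $0<|R_n(\zeta)-1|\le C\theta^n$, with $\theta=\max_{j\ge2}|\alpha_j(\zeta)/\alpha_1(\zeta)|<1$. Exploiting it requires an archimedean lower bound for a linear form in logarithms, which your proposal never invokes.

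Separately, Step~2 breaks down for $k\ge 3$ whenever three or more $\alpha_j(\zeta)$ share the minimal $\mathfrak p$-adic valuation. Yu's theorem bounds $\operatorname{ord}_{\mathfrak p}(\eta_1^{b_1}\cdots\eta_m^{b_m}-1)$, which covers at most two terms of minimal valuation. A three-term sum of equal $\mathfrak p$-adic size is a three-term $S$-unit problem, for which no effective bound is known in general.

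\textbf{The missing idea.} No $\mathfrak p$-adic analysis is needed. The paper's route works only at the archimedean place where $\alpha_1(\zeta)$ dominates, and the $\mathcal S$-part enters only through exponents. The version that works is as follows.

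Fix $\pi_i$ with $\mathfrak p_i^{h_K}=[\pi_i]$. Using Lemma~\ref{lem3}, choose a generator $v$ of $\mathfrak a^{h_K}$ with $h(v)\ll 1+\log N(\mathfrak a)$. Then $U_n^{h_K}=\mu\,\epsilon_1^{m_1}\cdots\epsilon_r^{m_r}\pi_1^{r_1}\cdots\pi_s^{r_s}v$ with $r_i,|m_j|\ll n$, since $h(U_n)\ll n$ and $\log|N(U_n)|\ll n$.

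Apply Lemma~\ref{lem1} to $R_n(\zeta)^{h_K}-1$, with $v$ in the last slot. This quantity is nonzero for large $n$ by~(ii), and is $\ll\theta^n$ in absolute value. Then $\log A_{\mathrm{last}}\ll 1+\log N(\mathfrak a)$ while all other $\log A_j$ are fixed, so $B\ll n/\log A_{\mathrm{last}}$. One gets $n\ll\log A_{\mathrm{last}}\,\log(en/\log A_{\mathrm{last}})$, which forces $\log N(\mathfrak a)\gg n$. Hence $N([U_n]_{\mathcal S})=|N(U_n)|/N(\mathfrak a)\le|N(U_n)|^{1-C_4}$.

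Note that the paper's written proof does not quite do this, so do not model the fix on it verbatim:
\begin{enumerate}
\item It takes $\eta_3=U_n(\zeta)$, so the large height sits on all of $U_n$ rather than on $v$.
\item The resulting bound $n\ll\log A_3$ therefore carries no information about the $\mathcal S$-part.
\item The estimate $h(U_n)\ll\log|N(U_n)|$ it rests on applies Lemma~\ref{lem3} without the unit adjustment.
\item Its Case~II ends without a conclusion.
\end{enumerate}
Splitting off the $\mathcal S$-part as fixed generators with $O(n)$ exponents is exactly what makes the normalized $B$ of Lemma~\ref{lem1} pay off.
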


The organization of this paper is as follows. In the next section, we present some technical ingredients needed to prove our results. In Section \ref{proofthm}, we present the proof of Theorems \ref{thm1} and \ref{thm2} following the methods of the papers \cite{ostafe2022skolem, stewart1982divisors, bugeaud2017parts} with suitable modifications. 

\section{Preliminaries}\label{sec3}
In this section, we give some results that are used to prove our main theorems. We shall define the height
$H(\beta)$ of an algebraic number $\beta$ by
\[
H(\beta) = |a_d| \prod_{i=1}^{d} \max\{1, |\beta_i|\},
\]
where
\[
a_d X^d + \cdots + a_0 = a_d \prod_{i=1}^{d} (X - \beta_i)
\]
is the minimal polynomial of $\beta$ in $\mathbb{Z}[X]$. The absolute logarithmic height of $\beta$ is defined by
\[
h(\beta)=\frac{1}{d}\left( \log |a_d|+\sum_{i=1}^d \log \max(1,|\beta_i| )\right).
\]

To prove our main results, we use the following lemma, which is a Baker-type result of Matveev \cite{matveev2000explicit}.

\begin{lemma}[\cite{matveev2000explicit}]\label{lem6}
    Denote by $\eta_1,\ldots,\eta_m$ algebraic numbers, not $0$ or $1$, by
$\log \eta_1,\ldots,\log \eta_m$ the principal values of their logarithms, by $D_1$ the degree of the number field $L=\Q(\eta_1,\ldots,\eta_m)$ over $\Q$, and by $b_1,\ldots,b_m$ rational integers. Define
\[
B=\max\{|b_1|,\ldots,|b_m|,2\}
\] and 
\[
A_i = \max\{D_1h(\eta_i), |\log \eta_i|, 0.16\} (1 \leq i \leq m),
\]
where $h(\eta)$ denotes the absolute logarithmic height of $\eta$. Consider the linear form
\[
\Lambda = b_1 \log \eta_1 + \cdots+ b_m \log \eta_m
\] 
and assume that $\Lambda\neq 0$. Then
\[
\log |\Lambda|\geq -C^mD_1^{m+2}A_1 \cdots A_m \log(D_1) \log(B).
\]

\end{lemma}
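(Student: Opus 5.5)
This lemma is not proved in the paper; it is imported from Matveev \cite{matveev2000explicit}. My plan is therefore to reduce it to the published statement rather than rebuild transcendence theory. Concretely, I would quote Matveev's Corollary~2.3 for real or complex logarithms (the corollary number is from memory and should be checked against the source). That corollary gives
\[
\log|\Lambda| > -1.4\cdot 30^{m+3} m^{4.5} D_1^{2}(1+\log D_1)(1+\log B)A_1\cdots A_m
\]
under the same hypotheses on $\eta_i$, $b_i$ and $A_i$. The work is to show this implies the displayed form with a single absolute constant $C$.

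\textbf{Step 1: the dimension factors.} Since $m\ge 1$, we have $m^{4.5}\le 6^{m}$, say. Also $1.4\cdot 30^{m+3}\le (30\cdot 1.4\cdot 27000)^{m}$. Hence $1.4\cdot 30^{m+3}m^{4.5}\le C^{m}$ for an explicit absolute $C$.

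\textbf{Step 2: the logarithmic factors.} Because $B\ge 2$, we have $1+\log B\le (1+1/\log 2)\log B$. For $D_1\ge 2$, similarly $1+\log D_1\le 3\log D_1$. The case $D_1=1$ (all $\eta_i$ rational) would make $\log D_1=0$ and the stated bound meaningless, so it is tacitly excluded. In the application $L\supseteq \mathbb{Q}(\zeta)$ with $\zeta$ a root of unity outside a finite set, so $D_1\ge 2$ anyway.

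\textbf{Step 3: the power of $D_1$.} Matveev's $D_1^{2}$ is weaker than the stated $D_1^{m+2}$, since $D_1^{2}\le D_1^{m+2}$. After enlarging $C$ to absorb the constants from Step 2, the inequality as stated follows. The hypothesis $\Lambda\neq 0$ and the use of principal branches match Matveev's, so nothing more is needed.

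\textbf{Main obstacle.} The only real difficulty is bookkeeping: identifying which of Matveev's versions is meant and checking that his $A_i$ normalization, $\max\{D_1h(\eta_i),|\log\eta_i|,0.16\}$, agrees with the one here. If the reader instead wanted a self-contained proof, I would follow Baker's method:
\begin{enumerate}
\item construct an auxiliary polynomial in the $\eta_i$ vanishing to high order at many integer points, via Siegel's lemma;
\item extrapolate the vanishing using the smallness of $|\Lambda|$ together with Cauchy/Schwarz-type estimates;
\item contradict this with a zero estimate (Philippon's, or Matveev's refined version with Kummer descent to reach $D_1^{2}$).
\end{enumerate}
The zero estimate is by far the hardest part, and I would not reproduce it. I would simply invoke \cite{matveev2000explicit}.
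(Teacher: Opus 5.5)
Your reduction matches the paper, which gives no proof and simply imports this bound from Matveev. Your bookkeeping is sound, including the observation that with $\log D_1$ in place of Matveev's $\log(eD_1)$ the statement is false for $D_1=1$. However, your claim that $D_1\ge 2$ in the application is unjustified. $L$ is generated by the $\eta_i\in K=\mathbb{Q}(\alpha_i(\zeta),f_i(\zeta))$, which need not contain $\zeta$; constant rational $\alpha_i,f_i$ already give $D_1=1$. The right fix is to read $\log D_1$ as $\log(eD_1)$.

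The one substantive correction concerns the citation. The bound $1.4\cdot 30^{m+3}m^{4.5}D_1^2(1+\log D_1)(1+\log B)A_1\cdots A_m$ is Matveev's bound for $\mathbb{Q}(\eta_1,\ldots,\eta_m)\subset\mathbb{R}$, not for complex logarithms. This lemma concerns principal complex logarithms, and the paper applies it with $\log(-1)=i\pi$. The complex case carries a larger polynomial factor in $m$ and $\log(emB)$ in place of $1+\log B$ (compare Lemma~\ref{lem1}). Your Steps~1--2 absorb these into $C^m\log B$ in exactly the same way, so the argument goes through once the correct version is cited.
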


Furthermore, we need the following variant of Lemma \ref{lem6} in the proof of Theorem \ref{thm2}. Note that the factor $\log A_n$ in the denominator of the definition of $B$ plays an important role.

\begin{lemma}[\cite{matveev2000explicit} \cite{bugeaud2018linear}]\label{lem1}
Let $n \ge 2$ be an integer, let $\eta_1,\ldots,\eta_n$ be non-zero algebraic
numbers and let $b_1,\ldots,b_n$ be integers. Further, let $D_2$ be the degree over
$\mathbb{Q}$ of a number field containing the $\eta_i$, and let
$A_1,\ldots,A_n$ be real numbers with
\[
\log A_i \ge \max\left\{ h(\eta_i), \frac{|\log \eta_i|}{D_2}, \frac{0.16}{D_2} \right\},
\qquad 1 \le i \le n.
\]

Set
\[
B := \max\left\{ 1, \max_{1 \le j \le n} \left\{ |b_j| \frac{\log A_j}{\log A_n} \right\} \right\}.
\]

Then we have
\begin{align*}
    \log \left| \eta_1^{b_1} \cdots \eta_n^{b_n} - 1 \right|
> -4 \times 30^{n+4} (n+1)^{5.5}& D_2^{\,n+2}
\log(eD_2)\log(enB)\\
&\log A_1 \cdots \log A_n.
\end{align*}
\end{lemma}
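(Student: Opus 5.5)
The plan is to obtain Lemma~\ref{lem1} from Matveev's theorem in its general, non-simplified form, namely the version in \cite{matveev2000explicit} where the parameter $B$ is allowed to be weighted as $\max_j |b_j| A_j/A_n$ rather than $\max_j |b_j|$. This is the form recorded in \cite{bugeaud2018linear}. Lemma~\ref{lem6} alone is too weak for this, because its $\log B$ does not see the weights $\log A_j/\log A_n$. The argument is therefore a reduction from a bound for a linear form in logarithms to a bound for $|\eta_1^{b_1}\cdots\eta_n^{b_n}-1|$, with careful tracking of constants.

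\textbf{Step 1: normalise the quantities.} Put $\Gamma=\eta_1^{b_1}\cdots\eta_n^{b_n}$ and assume $\Gamma\neq 1$; the inequality is only meaningful in that case. If $|\Gamma-1|\ge 1/2$, the claimed bound holds trivially, since its right-hand side is far below $-\log 2$. So assume $|\Gamma-1|<1/2$. Then the principal logarithm satisfies $|\log\Gamma|\le 2|\Gamma-1|$. Write
\[
\log\Gamma=\Lambda := b_1\log\eta_1+\cdots+b_n\log\eta_n+b_0\log(-1),
\]
where $b_0$ is an even integer, so that $b_0\log(-1)=b_0\pi i$ is an integer multiple of $2\pi i$. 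Taking imaginary parts and using $|\operatorname{Im}\log\eta_j|\le\pi$ gives $|b_0|\le \sum_j|b_j|+1$. Since $\log A_j\ge 0.16/D_2$, the ratios $\log A_j/\log A_n$ let us write $|b_0|\ll nB\cdot \log A_n/\log A_{\min}$. To avoid this awkward ratio, I will give $\eta_0=-1$ the smallest admissible weight $\log A_0=\pi/D_2$. Then $|b_0|\log A_0/\log A_n\le \pi(\sum_j |b_j|\log A_j/\log A_n)\cdot(\ldots)\ll nB$. This is where the factor $\log(enB)$ comes from.

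\textbf{Step 2: apply Matveev's theorem.} Since $\Gamma\ne1$, we have $\Lambda\neq0$, so we can apply Matveev's theorem (complex case) to the $n+1$ numbers $-1,\eta_1,\dots,\eta_n$. Here $D_2$ bounds the degree, possibly doubled to include $i$, which is absorbed into the constant. The heights $A_j$ are replaced by $D_2\log A_j$, which is admissible by the hypothesis on $\log A_i$, and the weighted $B$ is the one from Step~1. This yields
\[
\log|\Lambda| > -c(n)\,D_2^{n+2}\log(eD_2)\log(enB)\log A_1\cdots\log A_n ,
\]
where $\log A_0=\pi/D_2$ contributes only a constant factor. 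Combining with $|\Lambda|\le 2|\Gamma-1|$ gives the statement, provided the constant satisfies $c(n)\le 4\cdot 30^{n+4}(n+1)^{5.5}$.

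\textbf{Main obstacle.} The main difficulty is bookkeeping: checking that Matveev's explicit constant $\frac{1}{2}e\,n^{4.5}30^{n+3}\cdots$, with $n$ replaced by $n+1$, together with the extra term $-1$, the factor $2$ from $|\log\Gamma|\le2|\Gamma-1|$, and the possible doubling of the degree, all fit inside $4\times30^{n+4}(n+1)^{5.5}$. I would follow Bugeaud's derivation in \cite{bugeaud2018linear} for these estimates. A second point to check is that the weighted $B$ survives the insertion of $b_0$. The bound $|b_0|\le 1+\sum_j|b_j|$ is only useful after multiplying by $\log A_0/\log A_n$, which is why the small weight chosen for $\eta_0=-1$ is essential.
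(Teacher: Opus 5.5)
Your route is the standard derivation behind this lemma: adjoin $\eta_0=-1$ with $\log A_0=\pi/D_2$, apply Matveev's complex-case bound with the weighted $B$ to the resulting $n+1$ logarithms, then pass from $|\Lambda|$ to $|\Gamma-1|$. The paper gives no proof of its own and only cites Matveev and Bugeaud. However, the substance of the statement is the explicit constant $4\times 30^{n+4}(n+1)^{5.5}$, and your Step~1 estimate for $b_0$ is too lossy to reach it. You bound $|\operatorname{Im}\log\eta_j|$ by $\pi$ and then convert $\pi/D_2$ into $\log A_j$ via $\log A_j\ge 0.16/D_2$. That only certifies $B'\le(\pi/0.16)(nB+1)\approx 20(nB+1)$ for the weighted parameter $B'$ of the $(n+1)$-term form. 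In the simplified form of Matveev's theorem, the complex-case constant for $n+1$ logarithms is $\frac{e^2}{8}\,30^{n+4}(n+1)^{5.5}$. The factor $A_0=\pi$ raises this to about $2.9\cdot 30^{n+4}(n+1)^{5.5}$. So only a factor of roughly $1.38$ is left to absorb $\log(eB')/\log(enB)$, whereas with your bound this ratio is about $3$ when $n=2$, $B=1$.

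The fix is to use the hypothesis $|\log\eta_j|\le D_2\log A_j$ directly; this is what that hypothesis is for. From $b_0\pi=\operatorname{Im}\Lambda-\sum_j b_j\operatorname{Im}\log\eta_j$ you get
\[
\frac{|b_0|\log A_0}{\log A_n}=\frac{|b_0|\pi}{D_2\log A_n}\le\sum_{j=1}^n|b_j|\frac{\log A_j}{\log A_n}+\frac{|\Lambda|}{0.16}\le nB+\frac{|\Lambda|}{0.16}.
\]
The right-hand side of the lemma is below $-10^9$ (use $D_2\log A_j\ge 0.16$), so you may assume $|\Gamma-1|$, and hence $|\Lambda|$, is minuscule. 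Then $B'\le nB+1/100$, say, and the constant $4$ follows with room to spare.

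Separately, drop the remark about doubling the degree. Since $-1\in\mathbb{Q}$ the field does not change, and non-real logarithms are already handled by Matveev's complex case. Such a doubling also could not be ``absorbed into the constant'': replacing $D_2$ by $2D_2$ multiplies $D_2^{n+2}\log(eD_2)$ by more than $2^{n+2}$, which the factor $4\times 30^{n+4}$ cannot accommodate.
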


\begin{lemma}\label{lem3}
Let $F$ be a number field of degree $D_3$, and let $\alpha \in \mathcal{O}_F \setminus \mathcal{O}_F^{\ast}$, where $\mathcal{O}_F$ and $\mathcal{O}_F^{\ast}$ denote the ring of integers of $F$ and the unit group of  $\mathcal{O}_F$ respectively. 
Then there is an effectively computable constant $C_{6}(F)$, depending on the fundamental units of 
$\mathcal{O}_F$, and an $\varepsilon \in \mathcal{O}_F^{\ast}$ such that
\[
|\overline{\varepsilon \alpha}| \le C_{6}(F)\, \bigl|N_{F/\mathbb{Q}}(\alpha)\bigr|^{1/D_3},
\]
where $|\overline{\alpha}|$ denotes the house of $\alpha$.
\end{lemma}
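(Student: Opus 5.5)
The plan is to use the standard geometry-of-units argument: Dirichlet's unit theorem and the logarithmic embedding. We look for a unit $\varepsilon$ that \emph{balances} the absolute values of the conjugates of $\varepsilon\alpha$, so that every one of them is close to the geometric mean $|N_{F/\mathbb{Q}}(\alpha)|^{1/D_3}$. The hypothesis $\alpha\notin\mathcal{O}_F^{\ast}$ is not really needed; only $\alpha\neq 0$ is used, so that the norm is nonzero.

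\emph{Setup.} Let $\sigma_1,\ldots,\sigma_{r_1}$ be the real embeddings of $F$ and $\sigma_{r_1+1},\ldots,\sigma_{r_1+r_2}$ representatives of the complex ones. Put $d_i=1$ for $i\le r_1$ and $d_i=2$ otherwise, and set $r=r_1+r_2-1$. Define the logarithmic embedding
\[
\ell(\beta)=\bigl(d_1\log|\sigma_1\beta|,\ldots,d_{r+1}\log|\sigma_{r+1}\beta|\bigr)\in\mathbb{R}^{r+1}.
\]
Let $\varepsilon_1,\ldots,\varepsilon_r$ be a system of fundamental units. By Dirichlet's theorem the vectors $\ell(\varepsilon_1),\ldots,\ell(\varepsilon_r)$ form a basis of the hyperplane $H=\{x:\sum_i x_i=0\}$. (If $r=0$ there is nothing to prove, since then $|\sigma\alpha|=|N(\alpha)|^{1/D_3}$ for every embedding $\sigma$.)

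\emph{Key step.} Since $\sum_i d_i\log|\sigma_i\alpha|=\log|N_{F/\mathbb{Q}}(\alpha)|$ and $\sum_i d_i=D_3$, the vector
\[
v=\ell(\alpha)-\frac{\log|N_{F/\mathbb{Q}}(\alpha)|}{D_3}\,(d_1,\ldots,d_{r+1})
\]
lies in $H$. Write $v=\sum_j x_j\ell(\varepsilon_j)$ with $x_j\in\mathbb{R}$. Choose integers $b_j$ with $|x_j-b_j|\le 1/2$, and set $\varepsilon=\prod_j\varepsilon_j^{-b_j}$. Then, coordinatewise,
\[
d_i\log|\sigma_i(\varepsilon\alpha)|=\frac{d_i\log|N_{F/\mathbb{Q}}(\alpha)|}{D_3}+\sum_j (x_j-b_j)\,d_i\log|\sigma_i\varepsilon_j|.
\]
Dividing by $d_i$ gives
\[
\log|\sigma_i(\varepsilon\alpha)|\le \frac{\log|N_{F/\mathbb{Q}}(\alpha)|}{D_3}+c,
\qquad
c=\frac12\max_i\sum_{j=1}^r\bigl|\log|\sigma_i\varepsilon_j|\bigr|.
\]
Every conjugate of $\varepsilon\alpha$ equals some $\sigma_i(\varepsilon\alpha)$ or its complex conjugate. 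Hence the house satisfies $|\overline{\varepsilon\alpha}|\le C_6(F)|N_{F/\mathbb{Q}}(\alpha)|^{1/D_3}$ with $C_6(F)=e^{c}$.

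\emph{Effectivity.} The constant $c$ depends only on the conjugates of a fixed system of fundamental units, which can be computed effectively. Consequently $C_6(F)$ is effectively computable and depends only on the fundamental units, as claimed.

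\emph{Main obstacle.} The only real point is the rounding step: it needs the fact that $\ell(\mathcal{O}_F^{\ast})$ is a full lattice in $H$, so that every vector of $H$ lies within a bounded fundamental parallelepiped of a lattice point. This is exactly Dirichlet's unit theorem, which we take as known. Everything else is routine bookkeeping with the weights $d_i$.
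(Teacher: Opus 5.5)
Your proof is correct; the case $\alpha=0$, which the hypothesis formally allows, is trivial since both sides vanish. The paper gives no argument of its own and only cites Lemma 1.3.8 of Natarajan--Thangadurai, and your approach is the standard Dirichlet-unit-theorem argument behind that lemma: balance the logarithmic embedding of $\alpha$ and round the resulting vector in the hyperplane against the basis $\ell(\varepsilon_1),\ldots,\ell(\varepsilon_r)$.
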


We recall that $|\overline{\alpha}|$, the house of $\alpha$, is defined as the maximum absolute value of the conjugates of $\alpha$ over $\mathbb{C}$.
\begin{proof}
See \cite[Lemma 1.3.8]{natarajan2020pillars}.
\end{proof}

\begin{lemma}\label{lem4}
Let $x,a \in \mathbb{R}$ . If
$\frac{a}{\log a} < x$,
then
\[
a < \max\{ e,\; 2x\log x \}.
\]
\end{lemma}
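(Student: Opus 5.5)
Since $a/\log a$ is only meaningful and increasing for $a>e$, I would split into cases. If $a\le e$ the conclusion $a<\max\{e,2x\log x\}$ holds trivially (with equality excluded only in the degenerate case $a=e$, which I would handle by reading the hypothesis as forcing $x>e$ there, since $e/\log e=e<x$ gives $2x\log x>2e>e$). So the substantive case is $a>e$, where $\log a>1$ and the function $\phi(t)=t/\log t$ is strictly increasing on $(e,\infty)$, because $\phi'(t)=(\log t-1)/(\log t)^2>0$.

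In the case $a>e$, the hypothesis gives $x>a/\log a>e$, so in particular $\log x>1$ and $2x\log x>e$. My plan is to argue by contradiction: suppose $a\ge 2x\log x$. Then by monotonicity of $\phi$ on $(e,\infty)$ (note $2x\log x>e$), we get
\[
\frac{a}{\log a}\ \ge\ \frac{2x\log x}{\log(2x\log x)}=\frac{2x\log x}{\log 2+\log x+\log\log x}.
\]
It then suffices to show $\log 2+\log x+\log\log x\le 2\log x$, i.e.\ $2\log x\ge x\cdot$ nothing more than $\log(2\log x)\le \log x$, equivalently $2\log x\le x$, which holds for all $x>0$ (the minimum of $x-2\log x$ is $2-2\log 2>0$ at $x=2$). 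This yields $a/\log a\ge x$, contradicting the hypothesis $a/\log a<x$. Hence $a<2x\log x\le\max\{e,2x\log x\}$.

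The only step requiring any care is the elementary inequality $\log(2\log x)\le\log x$ for $x>e$ together with checking that $2x\log x$ lies in the range where $\phi$ is increasing; both are routine calculus. I do not expect a genuine obstacle; the main thing is to keep track of the case $a\le e$ (including possibly $0<a<1$ or $a\le 0$, where $\log a$ is negative or undefined and the claim $a<e\le\max\{e,2x\log x\}$ is immediate whenever $a<e$).
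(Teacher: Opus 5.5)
Your proof is correct but takes a different route from the paper's.

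The paper argues directly. For $a>1$ it multiplies the hypothesis by $\log a>0$ to get $a<x\log a$, and takes logarithms to obtain $\log a-\log\log a<\log x$. It then uses $\log\log a<\tfrac12\log a$ to extract the crude bound $\log a<2\log x$. Feeding this back into $a<x\log a$ gives $a<2x\log x$.

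You instead use that $t/\log t$ is increasing on $(e,\infty)$ and test the candidate bound $2x\log x$ by contradiction. This reduces everything to checking $\phi(2x\log x)\ge x$, i.e.\ $x\ge 2\log x$.

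Both arguments rest on the same elementary fact $u>2\log u$: the paper applies it with $u=\log a$, you apply it with $u=x$.

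\begin{itemize}
\item The paper's bootstrap is shorter and needs neither monotonicity nor a case split. It gives $a<2x\log x$ for every $a>1$ and produces the intermediate estimate $\log a<2\log x$ along the way.
\item Your version is more careful about domains. The paper silently assumes $\log a>0$ and never treats $a\le 1$, which is exactly where the $e$ in the maximum is actually needed. Your case $a\le e$ covers that range explicitly.
\end{itemize}

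One small slip in your write-up: the stray inequality ``$2\log x\ge x$'' in your reduction sentence points the wrong way and should be deleted. The chain you need is
\[
\log 2+\log\log x\le\log x \iff \log(2\log x)\le\log x \iff 2\log x\le x,
\]
which is what you go on to verify.
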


\begin{proof}
If  $\frac{a}{\log a} < x$, then 
\begin{equation}\label{eq15}
a < x \log a.
\end{equation}
Taking logarithms of the inequality \eqref{eq15}, we have $\log a - \log\log a < \log x.$
Further, we have that $\frac{\log a}{2} > \log\log a,$
which implies
\begin{equation}\label{eq16}
   \log a < 2\log x. 
\end{equation}
Combining \eqref{eq15} and \eqref{eq16}, we conclude that $a < 2x\log x.$
This completes the proof.
\end{proof}

\begin{lemma}\label{lem2}
Let $g_1(X), g_2(X) \in \mathbb{C}(X)$ be complex rational functions of degrees
$n_1$ and $n_2$, respectively. Then
\[
\#\{ z \in \mathbb{C} : |g_1(z)| = |g_2(z)| = 1 \} \le (n_1 + n_2)^2,
\]
unless $(g_1(X), g_2(X))$ is exceptional.
\end{lemma}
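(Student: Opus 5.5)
The approach is to turn the two conditions $|g_1(z)|=1$ and $|g_2(z)|=1$ into real plane algebraic curves, bound their intersection by B\'ezout, and show that the only way B\'ezout can fail (a shared curve component) forces the pair to be exceptional.

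\textbf{Step 1: reduction to real curves.} Write $g_j=A_j/B_j$ with coprime $A_j,B_j\in\mathbb{C}[X]$ and $\max\{\deg A_j,\deg B_j\}=n_j$. Put $z=x+iy$. For $z$ not a pole, $|g_j(z)|=1$ is equivalent to
\[
F_j(x,y):=A_j(z)\overline{A_j}(\bar z)-B_j(z)\overline{B_j}(\bar z)=0 .
\]
Since $\bar z=x-iy$, each $F_j$ is a polynomial in $\mathbb{R}[x,y]$ of total degree at most $2n_j$. Poles and common zeros are excluded because $A_j,B_j$ are coprime. So the set in the statement lies in $\{F_1=0\}\cap\{F_2=0\}\subset\mathbb{R}^2\subset\mathbb{C}^2$. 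I also check that $F_j\not\equiv 0$: otherwise $|g_j|\equiv 1$ on $\mathbb{C}$, so $g_j$ is constant, and this degenerate case is handled directly.

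\textbf{Step 2: the generic case.} If $F_1$ and $F_2$ have no common non-constant factor in $\mathbb{C}[x,y]$, B\'ezout's theorem gives at most
\[
\deg F_1\cdot\deg F_2\le 4n_1n_2\le (n_1+n_2)^2
\]
common zeros in $\mathbb{C}^2$, hence in $\mathbb{R}^2$. This is the claimed bound.

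\textbf{Step 3: the common-component case (the main obstacle).} Suppose $F_1,F_2$ share an irreducible factor $G$.
\begin{itemize}
\item If the real zero set of $G$ is finite, I remove $G$ and apply B\'ezout to the cofactors. Bounding the finitely many real points of $G$ still needs care, so I would first aim to show that a real irreducible component shared this way automatically has infinitely many real points. Otherwise I would adjust the count, for instance by noting that those points are singular points of both curves, whose number is controlled by the degrees.
\item The essential subcase is that $\{G=0\}\cap\mathbb{R}^2$ contains a real-analytic arc $\gamma$ on which $|g_1|=|g_2|=1$.
\end{itemize}

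In that subcase, by L\"uroth's theorem, $\mathbb{C}(g_1,g_2)=\mathbb{C}(g)$ for some rational $g$, and $g_j=Q_j\circ g$ with $Q_j$ rational. I want to choose $g$, after composing with a M\"obius map, so that $g(\gamma)$ lies on the unit circle. To do this I use the Schwarz reflection $\rho$ across $\gamma$. Since $|g_j|=1$ on $\gamma$, reflection gives $g_j\circ\rho=1/\overline{g_j}$ near $\gamma$. So the anti-holomorphic involution $h\mapsto 1/\overline{h\circ\rho}$ preserves the field $\mathbb{C}(g_1,g_2)=\mathbb{C}(g)$. It therefore acts on $\mathbb{C}(g)$ as $g\mapsto \mu(\bar g)$ for an anti-M\"obius involution, whose fixed set is a circle. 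Normalising by a M\"obius map, I may take that circle to be the unit circle, so $|g|=1$ on $\gamma$.

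Then $|Q_j(w)|=1$ for infinitely many $w$ on the unit circle. By the identity principle,
\[
Q_j(w)\,\overline{Q_j}(1/w)=1
\]
identically. Hence the zeros and poles of $Q_j$ occur in pairs $a,\,1/\bar a$, and $Q_j$ is a unimodular constant times a quotient of finite Blaschke products. So $(g_1,g_2)$ is exceptional, which completes the dichotomy.

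I expect the hardest points to be making the field-theoretic reflection argument rigorous, namely that the involution really preserves $\mathbb{C}(g)$ and has a circle of fixed points, and the bookkeeping of finitely many real points on shared components. If either resists, the fallback is to cite the corresponding result of Ostafe and Shparlinski \cite{ostafe2022skolem}, from which this lemma is drawn.
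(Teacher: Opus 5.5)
The paper does not prove this lemma; its proof is only the citation of Pakovich \cite[Theorem 2.2]{pakovich2020level}. Your fallback should therefore point there, not to Ostafe--Shparlinski, who themselves quote Pakovich. Your proposal is a genuinely different, self-contained route: B\'ezout for the real curves $F_1=0$, $F_2=0$, and, when they share a component carrying a real arc, L\"uroth plus Schwarz reflection to force the Blaschke structure. The strategy is sound, and once repaired it even yields the sharper count $4n_1n_2\le (n_1+n_2)^2$. Two steps, however, need repair.

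\textbf{The reflection step is wrong as written.} The map $h\mapsto 1/\overline{h\circ\rho}$ is multiplicative but not additive, so it is not a field automorphism. Hence ``it preserves $\mathbb{C}(g)$, therefore it acts on $g$ by an (anti-)M\"obius map'' does not follow. Use instead $\Psi(h)=\overline{h\circ\rho}$ on meromorphic germs near a subarc of $\gamma$ avoiding critical points of $g_1$. There $\rho=g_1^{-1}\circ(w\mapsto 1/\bar w)\circ g_1$, and the identity principle gives $g_2\circ\rho=1/\overline{g_2}$ as well. Then $\Psi$ is a conjugate-linear field involution with $\Psi(g_j)=1/g_j$, so it maps $\mathbb{C}(g_1,g_2)=\mathbb{C}(g)$ onto itself. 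Consequently $\Psi(g)=\mu(g)$ with $\mu$ M\"obius and $\overline{\mu}\circ\mu=\mathrm{id}$, where $\overline{\mu}$ has conjugated coefficients. Since $\rho$ fixes $\gamma$ pointwise, $\overline{g}=\mu(g)$ on $\gamma$. Thus $g(\gamma)$ lies in the fixed set of the anti-holomorphic involution $w\mapsto\overline{\mu(w)}$; that set is non-empty, hence a circle. From here your normalisation and identity-principle argument go through.

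\textbf{The shared-component bookkeeping is left open, but your first option is true.} In the coordinates $u=x+iy$, $v=x-iy$ one has $F_j=A_j(u)\overline{A_j}(v)-B_j(u)\overline{B_j}(v)$. Suppose $|g_j(p)|=1$ and $g_j-g_j(p)$ vanishes to order $m$ at $p$. Write $g_j=g_j(p)(1+\phi^m)$ with $\phi$ a local coordinate at $p$, and set $s=\phi(u)$, $t=\overline{\phi(\bar v)}$. The curve becomes $(1+s^m)(1+t^m)=1$. Its germ at $(p,\bar p)$ consists of exactly $m$ smooth branches $t=\omega s(1+s^m)^{-1/m}$ with $\omega^m=-1$. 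The real locus $t=\bar s$ meets branch $\omega$ precisely in the one real-analytic arc of $\{|g_j|=1\}$ through $p$ whose tangent satisfies $\bar s/s=\omega$. Hence every irreducible component of $F_j$ containing a real point contains a real arc. If no common component of $F_1,F_2$ carries a real arc, the common factor $H$ has no real zeros. B\'ezout applied to $F_1/H$ and $F_2/H$ then gives at most $4n_1n_2$ solutions. Your alternative also works: the real zeros of $H$ would be singular points of the reduced curve, at most $\binom{\deg H}{2}$ of them, and this still fits under $\deg F_1\deg F_2$.
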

\begin{proof}
    See \cite[Theorem 2.2]{pakovich2020level}.
\end{proof}

We now proceed to the proofs of the main theorems. 

\section{Proof of Main Results}\label{proofthm}
\subsection{Proof of Theorem \ref{thm1}}
Let $\zeta \in \mathbb{U} \setminus \mathcal{C}_{\alpha,f}$ and $K$ be the field obtained by adjoining $\alpha_i(\zeta)$ and $f_i(\zeta)$ to $\Q$ for $i=1,\ldots,k$. Let $D$ be the degree of $K$ over $\Q$. Let $C_7,C_8,\ldots$ denote positive numbers which are effectively computable in terms of $\zeta, K, D, k, r$ and $h_K$.

First, consider the case where we have at least three dominant roots of $U_n(\zeta)$, that is, there exist distinct integers $1\leq t<s<m\leq k$ such that 
\[
|\alpha_t(\zeta)|=|\alpha_s(\zeta)|=|\alpha_m(\zeta)|
\] or equivalently,
\[
\frac{|\alpha_s(\zeta)|}{|\alpha_t(\zeta)|}=\frac{|\alpha_m(\zeta)|}{|\alpha_t(\zeta)|}=1.
\]
Since by hypothesis, $(\alpha_s/\alpha_t, \alpha_m/\alpha_t)$ is a non-exceptional rational function, from Lemma \ref{lem2} we see that for each $\binom{k}{3}$ possible choice of the triples $(t,s,m)$ there are at most $4d^2$ such $\zeta \in \mathbb{U} \setminus \mathcal{C}_{\alpha,f}$.
Hence, in total, we have excluded at most
\[
4d^2k(k-1)(k-2)/6=2d^2k(k-1)(k-2)/3
\] elements $\zeta\in \mathbb{U} \setminus \mathcal{C}_{\alpha,f} $. Next assume that we have exactly two dominant roots, that is, for some $1\leq i\neq j\leq k, |\alpha_i(\zeta)|=|\alpha_j(\zeta)|$. This implies  
\begin{equation}\label{eq7}
    \alpha_i(\zeta)\overline{\alpha_i}(\overline{\zeta})=\alpha_i(\zeta)\overline{ \alpha_i(\zeta)}=\alpha_j(\zeta)\overline{ \alpha_j(\zeta)}=\alpha_j(\zeta)\overline{\alpha_j}(\overline{\zeta}),
\end{equation}
 where $\overline{\zeta}$ is the complex conjugate of $\zeta$, which is again a root of unity. Now we use the fact that $\overline{\zeta}=\zeta^{-1}$ and thus \eqref{eq7} is reduced to a univariate polynomial equation in $\zeta$. Hence, there are at most $2d$ such elements in $\zeta\in \mathbb{U}\setminus\mathcal{C}_{\alpha,f}$ such that the sequence $(U_n(\zeta))_{n\geq 0}$ has two dominant roots.

Thus, for all but $2d(dk^3/3+1)$ elements $\zeta\in \mathbb{U} \setminus \mathcal{C}_{\alpha,f}$, the sequence $(U_n(\zeta))_{n\geq 0}$ has only one dominant root. Let for such an element $\zeta\in \mathbb{U}\setminus \mathcal{C}_{\alpha,f}$, we assume 
    \[    |\alpha_1(\zeta)|>\max_{i=2,\ldots,k}|\alpha_i(\zeta)|.
    \]
    Set 
    \[
    h_n(X)=f_2(X)\alpha_2(X)^n+\cdots+f_k(X)\alpha_k(X)^n
    \] and 
    \[
    R_n(\zeta)=\frac{U_n(\zeta)}{f_1(\zeta)\alpha_1(\zeta)^n}.
    \]
    Then from \eqref{ieq6},
    \[
    U_n(\zeta)=f_1(\zeta)\alpha_1(\zeta)^n+h_n(\zeta)
    \]
    and
    \begin{equation}\label{eq2}
         R_n(\zeta)=1+\frac{h_n(\zeta)}{f_1(\zeta)\alpha_1(\zeta)^n}.
    \end{equation}
We claim that there exists $\delta_j$ such that $0\leq \delta_j<1$ and $|\alpha_j(\zeta)|=|\alpha_1(\zeta)|^{\delta_j},~ 2\leq j\leq k$.
Consider the function $g(x)=|\alpha_1(\zeta)|^x, x\in \R$ which is a continuous increasing function whose range is $(0,\infty)$. Since $g(1)=|\alpha_1(\zeta)|$ and $0<|\alpha_j(\zeta)|<|\alpha_1(\zeta)|$, by the intermediate value theorem, there exists $\delta_j$ such that $0<\delta_j<1$ and $g(\delta_j)=|\alpha_j(\zeta)|$. That is, $|\alpha_j(\zeta)|=|\alpha_1(\zeta)|^{\delta_j}$. By choosing $\delta=\max_{2\leq j\leq k}\delta_j$, we infer that $|\alpha_j(\zeta)|\leq |\alpha_1(\zeta)|^{\delta} $ for all $2\leq j\leq k$. Then
\begin{align}\label{neq2}
\begin{split}|h_n(\zeta)|&=|f_2(\zeta)\alpha_2(\zeta)^n+\cdots+f_k(\zeta)\alpha_k(\zeta)^n|\\
    &\leq \sum_{j=2}^{k}|f_j(\zeta)| |\alpha_j(\zeta)|^n\leq C_{7} |\alpha_1(\zeta)|^{\delta n}
    \end{split}
    \end{align} 
for some  $0\leq \delta<1$. So, from \eqref{eq2} and \eqref{neq2},
    \begin{align}\label{eq9}
        |R_n(\zeta)-1|&=\left|\frac{h_n(\zeta)}{f_1(\zeta)\alpha_1(\zeta)^n}\right|<\frac{C_7|\alpha_1(\zeta)|^{\del n}}{|f_1(\zeta)||\alpha_1(\zeta)|^n}\\ \nonumber
        &<C_8 |\alpha_1(\zeta)|^{-n(1-\del)},
    \end{align}
 Next, we consider two cases as follows.
    
Case I: Suppose $|\alpha_1(\zeta)^{-n}f_1(\zeta)^{-1}U_n(\zeta)-1|\geq \frac{1}{2}$. Then $$2|h_n(\zeta)|>|f_1(\zeta)||\alpha_1(\zeta)|^n.$$
Using \eqref{neq2}, we get  
\[
|\alpha_1(\zeta)|^{n(1-\del)}<\frac{2C_7}{|f_1(\zeta)|}.
\] This implies $n<C_9$. But this case is not possible since we assume $n\geq C_1$. 

Case II: Suppose $|\alpha_1(\zeta)^{-n}f_1(\zeta)^{-1}U_n(\zeta)-1|< \frac{1}{2}$. Then there exists an integer $b_0$, with $|b_0|\leq 3B^{'}$, where $B^{'}=\max\{1,n\}=n$, such that 
\[
\Lambda:=|b_0\log(-1)-n\log\alpha_1(\zeta)-\log f_1(\zeta)+\log U_n(\zeta)|.
\]  
Using the fact that $|\log (1+x)|\leq 2|x|$ for $|x|\leq \frac{1}{2}$, we have
\begin{equation}\label{neq3}
    |\Lambda|\leq 2 |\alpha_1(\zeta)^{-n}f_1(\zeta)^{-1}U_n(\zeta)-1|.
\end{equation}
Suppose that the ideal generated by $U_n(\zeta)$ is written as,
$[U_n(\zeta)]=\mathfrak{p_1}^{a_1}\cdots\mathfrak{p_t}^{a_t}$ with $\mathfrak{p_1},\ldots,\mathfrak{p_t}$ are distinct prime ideals and $a_1,\ldots a_t$ are positive integers. Let $h_K$ denote the class number of the number field $K$, i.e.,  $\mathfrak{p}_i^{h_{K}}$ is principal ideal in $K$. Let $\mathfrak{p}_i^{h_K}=[\pi_i],$ ideal gnerated by $\pi_i\in K.$ Therefore,
    \[
    [U_n(\zeta)^{h_K}]=[\pi_i]^{a_1}\cdots[\pi_t]^{a_t}=[\pi_1^{a_1}\cdots\pi_t^{a_t}],
    \] which implies $U_n(\zeta)^{h_K}=\epsilon\pi_1^{a_1}\cdots\pi_t^{a_t}$, where $\epsilon$ is a unit in $K$. By Dirichlet unit theorem, we can write $ \epsilon=\mu \epsilon_1^{m_1}\cdots\epsilon_r^{m_r}$, where $\mu$ is a root of unity, $\epsilon_1,\ldots, \epsilon_r$ are fixed fundamental units, $m_1, \ldots, m_r$ are integers and $r$ is the rank of the unit group in $K$. Hence, we write
    \[
    U_n(\zeta)^{h_K}=\mu \epsilon_1^{m_1}\cdots\epsilon_r^{m_r}\pi_1^{a_1}\cdots\pi_t^{a_t}
    \]
    Taking logarithm on both sides, we get
\[
\log(U_n(\zeta))=\frac{1}{h_K}\left(\log \mu+m_1\log\epsilon_1+\cdots+m_r\log \epsilon_r+a_1\log \pi_1+\cdots+a_t\log\pi_t \right).
\] Thus,
\begin{align*}
    \Lambda=&|b_0\log(-1)-n\log\alpha_1(\zeta)-\log f_1(\zeta)+\\
    &\frac{1}{h_{K}}\left(\log \mu+m_1\log\epsilon_1+\cdots+m_r\log \epsilon_r+a_1\log \pi_1+\cdots+a_t\log\pi_t \right)|
\end{align*}
Now we apply Lemma \ref{lem6} with 
$\eta_0=-1, \eta_1=\alpha_1(\zeta), \eta_2=f_1(\zeta), \eta_3=\mu,\eta_4=\epsilon_1,\ldots,\eta_{r+4}=\epsilon_r, \eta_{r+5}=\pi_1, \ldots, \eta_{r+t+5}=\pi_t$.
As shown in the proof of inequality (4.3.2) in~\cite{evertse2022effective}, it is shown that
\[
\max \left\{
3, |m_1|,\ \ldots,\ |m_r|
\right\}
\leq C_8\, h(\epsilon),
\] where $C_8$ is a constant depending on the field $K$. Now
\begin{equation}\label{neq6}
    h(\epsilon)=h\left(\frac{U_n(\zeta)^{h_K}}{\prod_{i=1}^t\pi_i^{a_i}}\right)\leq h_K\cdot h(U_n(\zeta))+\sum_{i=1}^ta_ih(\pi_i).
\end{equation}
We have
\begin{align*}
    h(U_n(\zeta))&=h(f_1(\zeta)\alpha_1(\zeta)^n+\cdots+f_k(\zeta)\alpha_k(\zeta)^n)\\
    &\leq h(f_1(\zeta))+nh(\alpha_1(\zeta))+\cdots h(f_k(\zeta))+nh(\alpha_k(\zeta))\leq C_9n,
\end{align*}
where $C_9$ is a constant depending on $k$ and $\zeta$. Also since $|N(U_n(\zeta))|=\prod_{i=1}^tN(\mathfrak{p}_i)^{a_i}$, we have $N(\mathfrak{p}_i)^{a_i}\leq |N(U_n(\zeta))| $. So,
\[
a_i\leq \frac{\log |N(U_n(\zeta))|}{\log N(\mathfrak{p}_i)}\leq \frac{\log |N(U_n(\zeta))|}{\log 2},
\] since $N(\mathfrak{p}_i)\geq 2$.
Also note that
\begin{align*}
    |N(U_n(\zeta))|=\prod_{j=1}^{D}|\sigma_j(U_n(\zeta))|&=\prod_{j=1}^{D}|\sigma_j(f_1(\zeta)\alpha_1(\zeta)^n+\cdots+f_k(\zeta)\alpha_k(\zeta)^n)|\\
    &\leq \prod_{j=1}^{D} C_{\sigma_j}M_{\sigma_j}^n,
\end{align*} 
where $\sigma_j(j=1,\ldots, D)$ are the isomorphisms in Gal($K/\Q$) with $M_{\sigma_j}=\max_{1\leq i\leq k}\{|\sigma_j(\alpha_i(\zeta))|\}$ and  $C_{\sigma_j}=\max_{1\leq i\leq k}\{|\sigma_j(f_i(\zeta))|\}$. Therefore, 
$a_i\leq C_{10}n,~ (i=1,\ldots,t), C_{10}$ is a constant depending on $D, K, k$ and $\zeta$. Now we find the upper bound for $h(\pi_i)$ for $i=1,\ldots,t$.
Let us denote the house of $\pi_i$ by $|\overline{\pi_i}|=\max_{j=1,\ldots,D}\{|\sigma_j(\pi_i)|\}$. Thus,
\[
h(\pi_i)=\frac{1}{D}\sum_{j=1}^D\max\left(\log |\sigma_j(\pi_i)|,0 \right)\leq \log |\overline{\pi_i}|\leq C_{11}\log |N(\pi_i)|,
\]  where the last inequality can be obtained from Lemma \ref{lem3}. Since $|N(\pi_i)|=C_{12}N(\mathfrak{p}_i)$ and $
N(\mathfrak{p}_i)\leq |N(U_n(\zeta))|$, we have 
$\log |N(\pi_i)|\leq \log |N(U_n(\zeta))|\leq C_{13}n$. Therefore $h(\pi_i)\leq C_{14}n$ and \eqref{neq6} becomes
\[
h(\epsilon)\leq h_KC_9n+tC_{15}n^2\leq C_{16}tn^2.
\]
Thus, 
\[
B=\max\left\{ b_0,n,\frac{m_1}{h},\ldots,\frac{m_r}{h},\frac{a_1}{h},\ldots, \frac{a_t}{h}\right\}\leq C_{17}tn^2.
\] Furthermore, $|\log (-1)|=\pi, h(-1)=0,~ \log A_0=\frac{\pi}{D},~ \log A_1, \ldots, \log A_{r+4}$ are constants depending on the field $K,\zeta, r$ and $A_j=\max\{Dh(\pi_i), |\log \pi_i|, 0.16\}$ for $j=r+5,\ldots,r+t+5, ~ i=1,\ldots,t$.
Since $h(\pi_i)\leq C_{11}\log |N(\pi_i)|$ and that $|\log \pi_i|\leq C_{18}h(\pi_i)$, we choose $A_j=C_{19}\log |N(\pi_i)|$.  
    Using Lemma \ref{lem6} and \eqref{neq3}, we get
    \begin{equation}\label{eq10}
        |R_n(\zeta)-1|>\frac{1}{2}\exp\left(-C_{20}^t\log |N(\pi_1)|\cdots\log|N(\pi_t)| \log (nt) \right).
    \end{equation}
Comparing \eqref{eq9} and \eqref{eq10}, we get
 \[
    C_8|\alpha_1(\zeta)|^{-n(1-\del)}>\frac{1}{2} \exp\left(- C_{20}^t\log |N(\pi_1)|\cdots\log|N(\pi_t)| \log (nt)\right).
    \] This implies
    \[
    {-n(1-\del)} \log |\alpha_1(\zeta)|>-C_{20}^t\log |N(\pi_1)|\cdots\log|N(\pi_t)| \log (nt)-\log C_8
    \] and hence
    \begin{equation}\label{eq11}
        C_{21}\frac{n}{\log (nt)} <  C_{22}^t\log |N(\pi_1)|\cdots\log|N(\pi_t)|.
    \end{equation} 
   By the arithmetic-geometric mean inequality
   \begin{equation}\label{neq8}
        \prod_{i=1}^t\log |N(\pi_i)|\leq \left( \frac{\log\left(\prod_{i=1}^t |N(\pi_i)|\right)}{t}\right )^t
   \end{equation}
   Since $\prod_{i=1}^t |N(\pi_i)|=\prod_{i=1}^tN(\mathfrak{p}_i)^{h_K}=N(Q(U_n(\zeta)))^{h_K}$ it follows from \eqref{neq8} and \eqref{eq11} that
   \[
   C_{21} \frac{n}{\log (nt)}<C_{22}^t\left(\frac{h_K\log (N(Q(U_n(\zeta))))}{t}\right)^t.
   \] That is,
   \[
   \log n-\log (\log n+ \log t)+\log C_{21}<t\log C_{22}+t\log \left(\frac{h_K \log(N(Q(U_n(\zeta))))}{t}\right).
   \] Since $\log (\log n+\log t)=\log\left(\log n\left(1+\frac{\log t}{\log n}\right)\right)= \log \log n+\log \left(1+\frac{\log t}{\log n}\right)$ and using the fact that $\frac{\log t}{\log n}\leq \frac{\log t}{\log 2}$ and hence $\log \left(1+\frac{\log t}{\log n}\right)\leq \log \log t\leq C_{23}t$, we get
   \[
   \log n-\log \log n+\log C_{21}<tC_{24}+t\log \left(\frac{h_K \log(N(Q(U_n(\zeta))))}{t}\right),
   \]
    where $C_{24}=\log C_{22}+C_{23}$.
  So for sufficiently large $n$,
  \[
  \frac{\log n}{t}-C_{25}\frac{\log \log n}{t}<C_{24}+\log \left(\frac{h_K \log(N(Q(U_n(\zeta))))}{t}\right),
  \]
  hence
 \begin{equation}\label{neq9}
     C_{26}t\exp\left(\frac{\log n}{t}-C_{25}\frac{\log \log n}{t}\right)<\log(N(Q(U_n(\zeta))))
 \end{equation}
 We assume that $t$ is less than $\log n/\log \log \log n$. Put
 \[
 F(t)=t\exp\left(\frac{\log n}{t}-C_{25}\frac{\log \log n}{t}\right)
 \] and notice that $F$ is decreasing for $t$ in the range from 1 to $\log n-C_{25}\log \log n$. Thus for $n$ sufficiently large $\log n/\log \log \log n$ is less than $\log n-C_{25} \log \log n$. Therefore
\[
F(t)\geq F\left(\frac{\log n}{\log\log\log n}\right).
\] 
Substituting this into \eqref{neq9}, we obtain
\[
\log (N(Q(u(n))))
>
C_{26}\,
\frac{\log n}{\log\log\log n}
\exp\left(
\frac{\log n-C_{25}\log\log n}
{\log n/\log\log\log n}
\right).
\]
Now simplifying the exponent we get

\[
\frac{\log n-C_{25}\log\log n}
{\log n/\log\log\log n}
=
\log\log\log n
-
C_{25}
\frac{(\log\log n)(\log\log\log n)}
{\log n}.
\]
Since
\[
C_{25}
\frac{(\log\log n)(\log\log\log n)}
{\log n}
\longrightarrow 0
\qquad (n\to\infty),
\]
 it follows that for sufficiently large $n$,
 \begin{equation}\label{neq10}
     \exp(C_{27}(\log n \log \log n)/\log \log \log n)<N(Q(U_n(\zeta))),
 \end{equation} as required. On the other hand if $t$ is at least $\log n/\log \log \log n$ then the product of the first $t$ primes exceeds $\exp((\log n \log \log n)/2 \log \log \log n)$ for $n$ sufficiently large and therefore
 \begin{equation}\label{neq11}
     \exp((\log n \log \log n)/2 \log \log \log n)< N(Q(U_n(\zeta))).
 \end{equation}
Thus \eqref{neq13} follows from \eqref{neq10} and \eqref{neq11}.

For any $n$,
\begin{equation}\label{neq14}
    N(Q(U_n(\zeta)))\leq \prod_{N(\mathfrak{q})\leq P(U_n(\zeta))}N(\mathfrak{q})<\exp(C_{28}P(U_n(\zeta)))
\end{equation} and thus\eqref{neq12} follows from \eqref{neq13} and \eqref{neq14}.
 This completes the proof of Theorem \ref{thm1}. \qed
\subsection{Proof of Theorem \ref{thm2}}
Let $\zeta \in \mathbb{U} \setminus \mathcal{C}_{\alpha,f}$ and $K$ be the field obtained by adjoining $\alpha_i(\zeta)$ and $f_i(\zeta)$ to $\Q$ for $i=1,\ldots,k$. Let $D$ be the degree of $K$ over $\Q$ and $\mathcal{S}=\{\mathfrak{p}_1,\ldots,\mathfrak{p}_s\}$ be a finite non-empty set of distinct prime ideals. Let $C_{29},C_{30},\ldots$ denote positive numbers that are effectively computable in terms of $U_n(\zeta), K, D, k$ and $\#\mathcal{S}$.
    As we proceeded with the proof of Theorem \ref{thm1}, we find that for all but $2d(dk^3/3+1)$ elements $\zeta\in \mathbb{U} \setminus \mathcal{C}_{\alpha,f}$, the sequence $(U_n(\zeta))_{n\geq 0}$ has only one dominant root. For such an element $\zeta\in \mathbb{U}\setminus \mathcal{C}_{\alpha,f}$,
    \[    |\alpha_1(\zeta)|>\max_{i=2,\ldots,k}|\alpha_i(\zeta)|
    \] and 
    \[
    |U_n(\zeta)-f_1(\zeta)\alpha_1(\zeta)^n|=|h_n(\zeta)|\leq |\alpha_1(\zeta)|^{\delta n}, \quad \text{for some $0\leq \delta<1$}.
    \] Thus we have 
    \begin{equation}\label{eq8}
        | R_n(\zeta)-1|\leq C_8|\alpha_1(\zeta)|^{-n(1-\del)},
    \end{equation} where $R_n(\zeta)$ is defined as in \eqref{eq2}.
    Now let 
    \[
     [U_n(\zeta)]=\mathfrak{p}_1^{r_1}\cdots\mathfrak{p}_s^{r_s}\mathfrak{a},
    \] where $r_1,\ldots,r_s$ are non-negative integers and $\mathfrak{a}$ is a non-zero ideal relatively prime to $\mathfrak{p}_1,\ldots,\mathfrak{p}_s$.
    Since $R_n(\zeta)\neq 1$, we apply Lemma \ref{lem1} with $\eta_1=f_1(\zeta), \eta_2=\alpha_1(\zeta)$ and $\eta_3=U_n(\zeta)$ and $b_1=-1,b_2=-n,b_3=1$. Also, $\log A_1, \log A_2$ are constants depending on $\zeta$ and $K$, and 
\[
\log A_3 \ge \max\left\{ Dh(U_n(\zeta)), |\log U_n(\zeta)|, 0.16 \right\}.
\]
Also,
\[
h(U_n(\zeta))=\frac{1}{D}\sum_{i=1}^D\max\left(\log |\sigma_i(U_n(\zeta))|,0 \right),
\] where $\sigma_i~(i=1,\ldots, D)$ are the isomorphisms in Gal($K/\Q$).
Let us assume that $\sigma_1(U_n(\zeta))=U_n(\zeta)$ , where $\sigma_1$ is the identity isomorphism, and denote the house of $U_n(\zeta)$ by
\[
|\overline{U_n(\zeta)}|=\max_{i=1,\ldots,D}\left\{|\sigma_i(U_n(\zeta))|\right\} \geq |U_n(\zeta)|\geq 1.
\]
Then
\[
\max (\log |\sigma_i(U_n(\zeta))|,0)\leq \log|\overline{U_n(\zeta)}|, \quad \text{for all } i=1,\ldots, D.
\]
That is, 
\begin{equation}\label{eq13}
    h(U_n(\zeta))\leq \log|\overline{U_n(\zeta)}|\leq C_{29} \log |N(U_n(\zeta))|,
\end{equation} where the last inequality can be obtained from Lemma \ref{lem3}. Since $|U_n(\zeta)|\geq 1$,
\begin{equation}\label{eq14}
\begin{split}
\log |U_n(\zeta)|=\max (\log |U_n(\zeta)|,0)&\leq \sum_{i=1}^D\max (\log \sigma_i|U_n(\zeta)|,0)\\
&=D\cdot h(U_n(\zeta)).
    \end{split}
\end{equation}
Thus, from \eqref{eq13} and \eqref{eq14} we can choose $A_3=C_{30}|N(U_n(\zeta))|$. Also
    \[
    B=\max\left\{1, \max\left\{ \frac{\log A_1}{\log A_3},n\cdot \frac{\log A_2}{\log A_3}, \frac{\log A_3}{\log A_3}\right\}\right\}=C_{31} \frac{n}{\log A_3}.
    \] 
    Using Lemma \ref{lem1}, we get
    \begin{equation}\label{eq12}
        |R_n(\zeta)-1|>\exp\left(-C_{32} \log A_3 \log \frac{n}{\log A_3} \right).
    \end{equation}
    Comparing  \eqref{eq8} and \eqref{eq12}, we get
    \[
    n(1-\delta)C_{33}\log |\alpha_1(\zeta)|<C_{32} \log A_3\log\frac{n}{\log A_3},
    \]
    and hence
    \[
    n<C_{34}\log A_3\log \frac{n}{\log A_3}.
    \]    
    By Lemma \ref{lem4}, we have $n<C_{35}\log A_3.$
    Since $A_3=\prod_{i=1}^sN(\mathfrak{p}_i)^{r_i} N(\mathfrak{a}),$
    we get
    \begin{equation}\label{neq1}        n<C_{35}\left(\log\left(\prod_{i=1}^sN(\mathfrak{p}_i)^{r_i} \right)+\log (N(\mathfrak{a}))\right).
    \end{equation}
     We distinguish into two cases.
    
    Case I: If $\Big(N(\mathfrak{a})>\prod_{i=1}^s N(\mathfrak{p}_i)^{r_i}\Big)$, then from \eqref{neq1} we have  $ n<2C_{35}\log (N(\mathfrak{a})).$  That is, 
    \[
    N(\mathfrak{a})>\exp\left( C_{36}n\right).
    \] Since
   \begin{align*}
    |N(U_n(\zeta))|=\prod_{j=1}^{D}|\sigma_j(U_n(\zeta))|&=\prod_{j=1}^{D}|\sigma_j(f_1(\zeta)\alpha_1(\zeta)^n+\cdots+f_k(\zeta)\alpha_k(\zeta)^n)|\\
    &\leq \prod_{j=1}^{D} C_{\sigma_j}M_{\sigma_j}^n,
\end{align*} 
where $\sigma_j(j=1,\ldots, D)$ are the isomorphisms in Gal($K/\Q$) with $M_{\sigma_j}=\max_{1\leq i\leq k}\{|\sigma_j(\alpha_i(\zeta))|\}$ and  $C_{\sigma_j}=\max_{1\leq i\leq k}\{|\sigma_j(f_i(\zeta))|\}$ and we have 
   \[
   n\geq C_{37}\log |N(U_n(\zeta))|.
   \]   
Hence,
    \begin{align*}
        \frac{N([U_n(\zeta)])}{N([U_n(\zeta)]_S)}&=N(\mathfrak{a})> \exp\left( C_{36}n\right)\\
        &>\exp(C_{36}C_{37}\log |N(U_n(\zeta))|)\geq |N(U_n(\zeta))|^{C_{38}}.
    \end{align*}
    Therefore,
    \[
    N([U_n(\zeta)]_S)\leq |N(U_n(\zeta))|^{1-C_{38}}.
    \]
    Case II: If $\Big(N(\mathfrak{a})\leq \prod_{i=1}^s N(\mathfrak{p}_i)^{r_i}\Big)$, then \eqref{neq1} becomes,
    \[
    n<2C_{35}\log\left(\prod_{i=1}^sN(\mathfrak{p}_i)^{r_i}\right). 
    \]
This completes the proof of Theorem \ref{thm2}. 
\qed

{\bf Acknowledgment:} We are very grateful to Prof. A. Ostafe and Prof. I.E. Shparlinski for
their useful comments in the preliminary version of this paper. D.N. and S.S.R. are supported by a grant from Anusandhan National Research Foundation (File No.:CRG/2022/000268).

\end{document}